\definecolor{mygray}{gray}{0.85}
\renewcommand{\leq}{\leqslant}
\renewcommand{\geq}{\geqslant}
\renewcommand{\trianglelefteq}{\trianglelefteqslant}
\def\subsection{\@startsection{subsection}{3}%
  \z@{.5\linespacing\@plus.7\linespacing}{.3\linespacing}%
  {\bfseries\centering}}
\def\subsubsection{\@startsection{subsubsection}{3}%
  \z@{.5\linespacing\@plus.7\linespacing}{.3\linespacing}%
  {\centering}}
\def\myfnt{\ifx\protect\@typeset@protect\expandafter\footnote\else\expandafter\@gobble\fi}
\newtheorem{theorem}{Theorem}
\newtheorem{corollary}[theorem]{Corollary}
\newtheorem{definition}[theorem]{Definition}
\newtheorem{lemma}[theorem]{Lemma}
\newtheorem{proposition}[theorem]{Proposition}
\newtheorem{fact}[theorem]{Fact}
\newcounter{claimcounter}
\newenvironment{claim}{\stepcounter{claimcounter}{\noindent {\bf Claim \theclaimcounter.}}}{}
\newenvironment{claimproof}[1]{\noindent{{\em Proof.}}\space#1}{\hfill $\rule{0.35em}{0.35em}$}
\begin{document}

\begin{abstract} We study the automorphism group of Hall's universal locally finite group $H$. We show that in $Aut(H)$ every subgroup of index $< 2^{\aleph_0}$ lies between the pointwise and the setwise stabilizer of a unique finite subgroup $A$ of $H$, and use this to prove that $Aut(H)$ is complete. We further show that $Inn(H)$ is the largest locally finite normal subgroup of $Aut(H)$. Finally, we observe that from the work of \cite{312} it follows that for every countable locally finite $G$ there exists $G \cong G' \leq H$ such that every $f \in Aut(G')$ extends to an $\hat{f} \in Aut(H)$ in such a way that $f \mapsto \hat{f}$ embeds $Aut(G')$ into $Aut(H)$. In particular, we solve the three open questions of Hickin on $Aut(H)$ from \cite{hickin}, and give a \mbox{partial answer to Question VI.5 of Kegel and Wehrfritz from \cite{kegel}.}
\end{abstract}

\title{The Automorphism Group of Hall's Universal Group}
\thanks{Partially supported by European Research Council grant 338821. No. 1106 on Shelah's publication list.}

\author{Gianluca Paolini}
\address{Einstein Institute of Mathematics,  The Hebrew University of Jerusalem, Israel}

\author{Saharon Shelah}
\address{Einstein Institute of Mathematics,  The Hebrew University of Jerusalem, Israel \and Department of Mathematics,  Rutgers University, U.S.A.}

\maketitle


\section{Introduction}

	In \cite{hall} Hall constructs a group $H$ with the following properties:
	\begin{enumerate}[(A)]
	\item $H$ is countable;
	\item $H$ is locally finite;
	\item $H$ embeds every finite group;
	\item any two isomorphic finite subgroups of $H$ are conjugate in $H$.
\end{enumerate}
	The group $H$ is unique modulo isomorphism and it is known as {\em Hall's universal locally finite group} (or simply as Hall's universal group). In model-theoretic terminology $H$ is an homogeneous structure, i.e. a structure $M$ such that every isomorphism between finitely generated substructures of $M$ extends to an automorphism of $M$. Groups of automorphisms of such structures have received extensive attention in the literature (see e.g. \cite{mac}, \cite{tent_mac}, \cite{lascar_hodges_shelah} and \cite{tent_ziegler}). Despite this, not much is known on $Aut(H)$. In this paper we make progress in this direction proving the following theorems:
	
	\begin{theorem}\label{SSIP} 
Every subgroup of $Aut(H)$ of index less than $2^{\aleph_0}$ lies between the pointwise  and the setwise stabilizer of a unique finite subgroup $A$ of $H$.
\end{theorem}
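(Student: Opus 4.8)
The plan is to split the statement into two parts: a small index property for $H$ (every $G \le Aut(H)$ with $[Aut(H):G] < 2^{\aleph_0}$ contains the pointwise stabilizer $Aut(H)_{(A_0)}$ of some finite subgroup $A_0$), and then an analysis of the squeeze between the pointwise and setwise stabilizers. Throughout I would use the two structural features of $H$ coming from (A)--(D): homogeneity (every isomorphism between finite subgroups extends to an element of $Aut(H)$) together with universality and conjugacy of isomorphic finite subgroups, which yield a strong amalgamation property for finite subgroups inside $H$. Observe first that, since $H$ is countable, the orbit of any finite subgroup under $Aut(H)$ is countable, so every $Aut(H)_{(A)}$ already has index $< 2^{\aleph_0}$; this is what makes the target subgroups the correct objects to aim for.

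For the first part I would argue by contradiction, assuming that $Aut(H)_{(B)} \not\le G$ for \emph{every} finite subgroup $B$, and build a perfect tree $(f_\eta)_{\eta \in 2^\omega}$ of automorphisms lying in pairwise distinct left cosets $Gf_\eta$, contradicting $[Aut(H):G]<2^{\aleph_0}$. At a node $\eta \in 2^{<\omega}$ I would have committed a finite approximation and a finite subgroup $B_\eta$, and, using the failure of $Aut(H)_{(B_\eta)} \le G$, produce two extensions, indexed by the next bit, that are separated modulo $G$. The separation is the heart of the matter: I must guarantee that branches through distinct $\eta,\eta' \in 2^\omega$ cannot be reconciled into the same coset by later stages. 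I expect this to be the main obstacle, and I would handle it by realizing the two branchings on \emph{independent} extensions of $B_\eta$ in $H$ (disjoint amalgams over $B_\eta$, available by universality and conjugacy), so that the resulting automorphisms differ in a way no later move can absorb.

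For the second part I would introduce the definable-closure operator $\mathrm{cl}(X) = \mathrm{Fix}(Aut(H)_{(X)})$ on finite subgroups and check, using that $H$ is locally finite and that the definable closure of a finite set in $H$ is finite, that $\mathrm{cl}(X)$ is again a finite subgroup. The key algebraic input is a \emph{decomposition (intersection) property}: if $A,B$ are closed with $Aut(H)_{(A)} \le G$ and $Aut(H)_{(B)} \le G$, then $Aut(H)_{(A\cap B)} \le G$. I would prove this by a back-and-forth, using strong amalgamation to write any automorphism fixing $A\cap B$ pointwise as a product of automorphisms each fixing $A$ or fixing $B$ pointwise. Granting the small index property, the family $\{A : A=\mathrm{cl}(A),\ Aut(H)_{(A)}\le G\}$ is then nonempty and closed under intersection; since its members are finite it has a least element $A$.

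Finally I would verify the squeeze and uniqueness. The containment $Aut(H)_{(A)} \le G$ holds by choice of $A$. For $G \le Aut(H)_{\{A\}}$, take $f \in G$; conjugation gives $Aut(H)_{(fA)} = f\,Aut(H)_{(A)}\,f^{-1} \le G$, so $fA$ lies in the family and minimality forces $A \subseteq fA$, whence $fA = A$ upon applying the same to $f^{-1}$; thus $f$ stabilizes $A$ setwise. For uniqueness I would recover $A$ intrinsically from $G$, as $\mathrm{Fix}$ of the subgroup generated by all pointwise stabilizers of finite subgroups contained in $G$: by the decomposition property this generated subgroup equals $Aut(H)_{(A)}$, so $A$ depends only on $G$. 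Any further witness $A'$ satisfies $Aut(H)_{(A')} = Aut(H)_{(\mathrm{cl}(A'))}$ with $\mathrm{cl}(A')$ in the family, and the same intrinsic characterization then yields $A' = A$.
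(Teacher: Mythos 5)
Your overall architecture (first the small index property, then a closure/intersection analysis to squeeze $G$ between a pointwise and a setwise stabilizer) is reasonable and in fact parallels how the literature organizes this, but both of your key steps are asserted at exactly the points where the real mathematical content lies, and neither can be pushed through from homogeneity, universality and conjugacy of finite subgroups alone. The paper does not attempt a direct perfect-tree construction: it derives the small index property from the Kechris--Rosendal theorem by verifying that $Aut(H)$ has \emph{ample generics}, which in turn requires two nontrivial inputs about the class of finite groups --- the extension property for partial automorphisms (EPPA, obtained from B.~H.~Neumann's work on amalgamation/permutational products) and the amalgamation property for automorphisms (a claim from Shelah's paper on existentially closed locally finite groups). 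Your ``separation'' step, which you correctly identify as the heart of the matter, is precisely what these inputs are needed for: knowing only that $Aut(H)_{(B_\eta)} \not\le G$ gives you, for each node, \emph{some} automorphism fixing $B_\eta$ pointwise and lying outside $G$, but it gives you no control over how products of such elements along different branches interact with the unknown subgroup $G$; ``realizing the branchings on independent amalgams'' does not by itself prevent two branches from landing in the same coset. Strong amalgamation is not known to imply the small index property (and SIP can fail for homogeneous structures with free amalgamation-like behaviour), so this step is a genuine gap, not a routine verification.

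The second half has the same character. The intersection property $Aut(H)_{(A)},\,Aut(H)_{(B)} \le G \Rightarrow Aut(H)_{(A\cap B)} \le G$, via the generation statement $Aut(H)_{(A\cap B)} = \langle Aut(H)_{(A)}, Aut(H)_{(B)}\rangle$ for closed $A,B$, is exactly the content of the companion paper on the \emph{strong} small index property that the authors cite for this step (together once more with the amalgamation claim from Shelah's paper); your one-line ``back-and-forth using strong amalgamation'' does not supply it, and for finite subgroups of $H$ (as opposed to finite subsets of, say, the random graph) the required factorization of an automorphism fixing $A \cap B$ is delicate. Your derivation of the squeeze and of uniqueness \emph{from} the intersection property is correct and matches the standard argument, as is your observation that pointwise stabilizers of finite subgroups have small index; so the proposal is a sound skeleton, but the two load-bearing lemmas are missing rather than proved.
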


	\begin{theorem}\label{Aut(H)_complete} $Aut(H)$ is complete (i.e. $Aut(H)$ has no center and no outer automorphisms).
\end{theorem}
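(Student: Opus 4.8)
Write $G=Aut(H)$, and recall that $G$ is complete precisely when the conjugation map $G\to Aut(G)$ is an isomorphism; since its kernel is $Z(G)$, the statement splits into $Z(G)=1$ and the surjectivity of this map. The center is the easy half. First note $H$ is centerless: given $1\ne h\in H$, property (C) lets us embed a finite $F\ni h$ into a finite group in which the image of $h$ is non-central, and (C) together with (D) then produce an element of $H$ not commuting with $h$, so $Z(H)=1$. Consequently $C_G(Inn(H))=1$, since an automorphism $\phi$ commuting with every $c_h$ satisfies $c_{\phi(h)}=\phi c_h\phi^{-1}=c_h$, i.e. $\phi(h)h^{-1}\in Z(H)=1$ for all $h$; in particular $Z(G)\le C_G(Inn(H))=1$.

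For surjectivity, fix $\Phi\in Aut(G)$. The plan is to manufacture $g\in G$ with $\Phi(c_h)=c_{g(h)}$ for all $h$, so that $\Phi$ agrees with conjugation by $g$ on the normal subgroup $Inn(H)\cong H$. Granting this, put $\Psi:=c_g^{-1}\circ\Phi$; then $\Psi(c_h)=g^{-1}c_{g(h)}g=c_{g^{-1}(g(h))}=c_h$, so $\Psi$ fixes $Inn(H)$ pointwise. For arbitrary $f\in G$ and $h\in H$ this gives $\Psi(f)\,c_h\,\Psi(f)^{-1}=\Psi(f c_h f^{-1})=\Psi(c_{f(h)})=c_{f(h)}=f c_h f^{-1}$, whence $f^{-1}\Psi(f)\in C_G(Inn(H))=1$; thus $\Psi=\mathrm{id}$ and $\Phi=c_g$ is inner. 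So everything reduces to recovering, from the abstract group $G$, the conjugation action of $G$ on $Inn(H)$ well enough to realize $\Phi$ by an element of $Aut(H)$.

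This recovery is where Theorem~\ref{SSIP} enters. An automorphism of $G$ preserves index, so $\Phi$ permutes the family $\mathcal S$ of subgroups of index $<2^{\aleph_0}$, and Theorem~\ref{SSIP} attaches to each $U\in\mathcal S$ a \emph{unique} finite $A_U\le H$ with $G_{(A_U)}\le U\le G_{\{A_U\}}$, where $G_{(A)}$ and $G_{\{A\}}$ are the pointwise and setwise stabilizers. Uniqueness forces $A\mapsto G_{(A)}$ to be injective, and one checks directly that $N_G(G_{(A)})=G_{\{A\}}$ and $G_{(A)}\cap G_{(B)}=G_{(\langle A\cup B\rangle)}$, so $A\mapsto G_{(A)}$ is an order-reversing, meet-preserving embedding of the poset of finite subgroups of $H$ into the subgroup lattice of $G$. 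I would first give an intrinsic (hence $\Phi$-invariant) description of the set of pointwise stabilizers inside $\mathcal S$; then $\Phi$ induces an automorphism of the poset of finite subgroups of $H$, and invoking (D) --- which makes the $Inn(H)$-action on this poset mirror the conjugation action of $H$ on its finite subgroups via $c_h\,G_{(A)}\,c_h^{-1}=G_{(hAh^{-1})}$ --- together with the homogeneity of $H$, this symmetry is realized by a genuine $g\in Aut(H)=G$. After replacing $\Phi$ by $c_g^{-1}\circ\Phi$ we may assume $\Phi$ fixes every $G_{(A)}$ setwise; writing $\beta:=\Phi(c_h)$ and using $\beta\,G_{(A)}\,\beta^{-1}=G_{(\beta(A))}=G_{(hAh^{-1})}$ with injectivity of $A\mapsto G_{(A)}$ forces $\beta$ to agree with $c_h$ on every cyclic subgroup, after which a rigidity lemma collapses $\beta$ to $c_h$, so $\Phi$ fixes $Inn(H)$ pointwise and the finish above applies.

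The main obstacle is the middle step: producing a purely group-theoretic characterization of the pointwise stabilizers and of the inclusion/conjugacy relations among them that $\Phi$ must respect, and then realizing the resulting abstract symmetry by an \emph{actual} automorphism of $H$ rather than a mere lattice automorphism --- this is exactly where the homogeneity of $H$ is indispensable, and verifying that the data assembled from $\mathcal S$ is coherent enough to be so realized is the technical heart. Two further points need care: pointwise stabilizers see only cyclic subgroups, not individual elements (indeed $G_{(\{a\})}=G_{(\langle a\rangle)}$), so one must rule out a residual \emph{power-automorphism} ambiguity by showing that an automorphism of $H$ fixing every finite subgroup setwise is trivial (again via universality and (D)); and one must confirm that $A\mapsto G_{(A)}$ really is injective and that $N_G(G_{(A)})=G_{\{A\}}$, both of which I expect to extract from the uniqueness clause of Theorem~\ref{SSIP}. (One could instead obtain the $\Phi$-invariance of $Inn(H)$ at a stroke from the companion fact that $Inn(H)$ is the largest locally finite normal subgroup of $G$, after which $\Phi|_{Inn(H)}\in Aut(Inn(H))\cong G$ supplies $g$ directly; but the route above keeps the argument resting on Theorem~\ref{SSIP}.)
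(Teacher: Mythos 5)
Your frame is the right one and matches the paper's: reduce everything to producing $g \in Aut(H)$ with $\Phi(c_h)=c_{g(h)}$, then kill the residue $c_g^{-1}\circ\Phi$ using $\mathbf{C}_{G}(Inn(H))=1$ (which also gives $Z(G)=1$); the first and last thirds of your argument are correct. But the middle step, which you yourself label ``the technical heart,'' is a genuine gap and not a routine verification. Theorem \ref{SSIP} hands $\Phi$ an automorphism of the \emph{poset} of pointwise stabilizers $G_{(A)}$, hence of the poset of finite subgroups of $H$ under inclusion; but a poset automorphism carries no information about the isomorphism types of the finite subgroups, and homogeneity of $H$ only extends honest isomorphisms between finite subgroups. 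An abstract automorphism of the subgroup poset need not be induced by any automorphism of $H$, so ``invoking (D) together with homogeneity'' does not produce $g$. This is precisely what the paper's machinery is built to repair: it works not with the poset of pointwise stabilizers but with the richer invariant family of all subgroups $G_{(K,L)}$ between the pointwise and setwise stabilizers (the structure $ExAut(H)$, Theorem \ref{exp_auto_th}), which remembers $Aut(K)$ and the restriction maps. Lemma \ref{crucial_lemma} then extracts enough isomorphism-type data from this: the order-$2$ subgroups are exactly the minimal rigid ones, so $\Phi$ induces a permutation $g_1$ of the involutions of $H$; and since any finite set of involutions sits inside a copy of $Alt(n_*)$, which by Lemma \ref{crucial_lemma}(8) is recognized up to isomorphism, $g_1$ is locally realized by isomorphisms of finite subgroups and hence extends to $g\in Aut(H)$. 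Without some substitute for this recognition lemma your construction of $g$ does not get off the ground. (Your final ``power-automorphism'' step is also only sketched, but that one is genuinely repairable, e.g. via Cooper's theorem or, as the paper does, by working with order-$2$ subgroups where setwise and pointwise stabilization coincide.)

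Ironically, your closing parenthesis contains the cleanest complete argument: since $Inn(H)$ is the locally finite radical of $G$ (Theorem \ref{radical}, proved independently), it is characteristic in $G$, so $\Phi$ restricts to an automorphism of $Inn(H)\cong H$, which \emph{is} the desired $g$, and your endgame finishes the proof. That route bypasses Theorem \ref{SSIP} entirely and is essentially the classical observation (attributed to Burnside, and noted in the paper's introduction) that centerless groups with $Inn$ characteristic in $Aut$ have complete automorphism groups. Had you promoted that parenthesis to the main argument you would have a correct proof by a different (and shorter) route than the paper's; as written, the route you actually commit to is missing its central construction.
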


	\begin{theorem}\label{radical} $Inn(H)$ is the locally finite radical of $Aut(H)$ (i.e. it is the largest locally finite normal subgroup of $Aut(H)$).
\end{theorem}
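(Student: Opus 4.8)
The plan is to prove the two inclusions separately. \emph{First}, that $Inn(H)$ is a locally finite normal subgroup: it is normal in $Aut(H)$ by the standard identity $f\iota_h f^{-1}=\iota_{f(h)}$ (where $\iota_h$ denotes conjugation by $h$), and since $H$ is simple and nonabelian we have $Z(H)=1$, so $Inn(H)\cong H$ is locally finite. \emph{Second}, and this is the whole content, I must show that any locally finite normal $N\trianglelefteq Aut(H)$ satisfies $N\le Inn(H)$.

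For the second inclusion I would begin with a dichotomy. Since $Inn(H)\cong H$ is simple, the normal subgroup $N\cap Inn(H)$ of $Inn(H)$ is either trivial or all of $Inn(H)$. If $N\cap Inn(H)=1$, then as $N$ and $Inn(H)$ are both normal their commutator lies in the intersection, so $[N,Inn(H)]=1$; but a routine computation shows $C_{Aut(H)}(Inn(H))=1$ (if $\theta\iota_h\theta^{-1}=\iota_h$ for all $h$ then $\theta(h)h^{-1}\in Z(H)=1$), whence $N=1\le Inn(H)$. So it remains to treat the case $Inn(H)\le N$ and to prove $N=Inn(H)$.

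Now fix $f\in N$; I claim $f\in Inn(H)$. Two reductions come for free from local finiteness. If $f$ had infinite order, $\langle f\rangle\cong\mathbb{Z}$ would be an infinite finitely generated subgroup of $N$, a contradiction; so $f$ has finite order. Next, for any $h\in H$ the subgroup $\langle f,\iota_h\rangle\le N$ is finitely generated, hence finite; using $f\iota_h f^{-1}=\iota_{f(h)}$ it contains every $\iota_{f^n(h)}$, so $\langle f^n(h):n\in\mathbb{Z}\rangle$ is finite, and therefore every finite $A\le H$ is contained in a finite $f$-invariant subgroup $B$. By properties (C) and (D) together with homogeneity, every automorphism of a finite subgroup of $H$ is induced by an inner automorphism of $H$; applied to $f|_B\in Aut(B)$ this shows $f$ is \emph{locally inner}: for each finite $f$-invariant $B$ there is $h_B\in H$ with $\iota_{h_B}|_B=f|_B$.

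The crux --- and the step I expect to be the main obstacle --- is to upgrade ``locally inner'' to ``inner'' for such an $f$. The naive coherence argument fails: writing $H=\bigcup_n B_n$ as an increasing union of finite $f$-invariant subgroups, the realizations $h_{B_n}$ are determined only modulo $C_H(B_n)$ and form a nested decreasing chain of cosets whose intersection may be empty, so locally inner automorphisms genuinely need not be inner; and one checks that for a finite-order locally inner $f$ the group $\langle Inn(H),f\rangle$ is itself locally finite. Thus no contradiction can be extracted from conjugation by $Inn(H)$ alone, and the argument must exploit that $N$ is normal under all of $Aut(H)$. Here I would invoke Theorem \ref{SSIP}: using homogeneity and the universality of $H$ I would produce an automorphism $\theta\in Aut(H)$ such that the two conjugates $f$ and $\theta f\theta^{-1}$, both lying in $N$, generate an infinite finitely generated subgroup, contradicting the local finiteness of $N$ unless $f$ was inner to begin with. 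Making this construction work --- pinning down $\theta$ from the stabilizer description of small-index subgroups in Theorem \ref{SSIP}, and verifying that the resulting two-generator group is infinite --- is the essential difficulty; once it is in place, every $f\in N$ is inner and hence $N=Inn(H)$, completing the proof.
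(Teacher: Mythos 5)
Your setup is fine and in places more careful than the paper's: the normality and local finiteness of $Inn(H)$, the dichotomy via simplicity of $H$ and triviality of $C_{Aut(H)}(Inn(H))$, the reduction to a finite-order $f$, the observation that every finite subgroup sits inside a finite $f$-invariant one, and the (correct) warning that $\langle Inn(H), f\rangle$ is locally finite for a finite-order locally inner $f$, so that no contradiction can come from $Inn(H)$-conjugates alone. But the proof stops exactly where the theorem begins. The entire content of the result is the step you defer --- producing, for a non-inner $f$ of finite order, an automorphism $g$ such that $f$ and $g^{-1}fg$ generate an infinite group --- and you give neither the construction of $g$ nor any argument that the resulting group is infinite. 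Moreover, the tool you propose to reach for, Theorem \ref{SSIP}, is not what is needed and is not what the paper uses here: the small index property says nothing about which finite configurations a non-inner automorphism must move.

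The paper's actual mechanism is different and concrete. It combines two external inputs: the Kegel--Kuzucuo\u{g}lu description of centralizers of finite subgroups of $H$ (Fact \ref{centralizers}), and Hickin's lemma on algebraically closed groups (Fact \ref{hickin_fact}), which says that a non-inner $f$ must map some copy $B'$ of a prescribed finite group $B$ over $A$ outside of $\langle B', C\rangle$. From these, Proposition \ref{crucial_prop} extracts, for every finite $K\leq H$, commuting involutions $a\neq b$ in $\mathbf{C}_H(K)-K$ with $f(a)=b$; this is precisely the ``$f$ moves something new, arbitrarily far out'' statement that local innerness alone cannot supply. The proof then runs an induction producing involutions $a_i, b_i$ with $f(a_i)=b_i$ in increasingly large centralizers, together with conjugators $c_i$, and takes $g=\lim(\square_{c_0\cdots c_i})$, a limit of inner automorphisms; the bookkeeping is arranged so that $g^{-1}f^{-1}gf$ sends $a_{2i}$ to $a_{2i+2}$ for all $i$ and hence has infinite order, contradicting local finiteness of $N$ since $g^{-1}f^{-1}gf\in N$ by normality. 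Your endgame ($\langle f,\theta f\theta^{-1}\rangle$ infinite) is morally the same as the paper's, but without Hickin's lemma (or a substitute for it) and the centralizer fact there is no way to start the ladder, so as written the proposal has a genuine gap at its central step.
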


	\begin{theorem}\label{univer_auto} For every countable locally finite $G$ there exists $G \cong G' \leq H$ such that every $f \in Aut(G')$ extends to an $\hat{f} \in Aut(H)$ in such a way that $f \mapsto \hat{f}$ embeds $Aut(G')$ into $Aut(H)$.
\end{theorem}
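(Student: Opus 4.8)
The plan is to reduce the statement to the construction of a single faithful action of $\Gamma := Aut(G')$ on a universal locally finite group by automorphisms extending the natural action on an embedded copy of $G$. Indeed, suppose we produce a countable locally finite group $H^{*}$ satisfying (A)--(D), an embedding $G \cong G' \leq H^{*}$, and a homomorphism $\rho \colon \Gamma \to Aut(H^{*})$ with $\rho(f)\!\upharpoonright\! G' = f$ for every $f \in \Gamma$. Then by Hall's uniqueness theorem \cite{hall} we have $H^{*} \cong H$, so we may identify $H^{*}$ with $H$; the assignment $f \mapsto \hat f := \rho(f)$ is a homomorphism by construction, and it is injective because $\rho(f) = \mathrm{id}_{H}$ forces $f = \rho(f)\!\upharpoonright\! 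G' = \mathrm{id}_{G'}$. Thus the whole theorem follows once such a triple $(H^{*}, G', \rho)$ is exhibited, and the functoriality and injectivity demanded by the statement become automatic.

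To build $(H^{*}, \rho)$ I would run a $\Gamma$-equivariant version of the standard amalgamation (Fra\"{\i}ss\'{e}) construction of $H$, carrying the $\Gamma$-action at every stage. The motivating special case is $G$ finite: iterate the regular representation, $H_0 = G'$ and $H_{n+1} = \mathrm{Sym}(H_n)$ with $H_n \hookrightarrow H_{n+1}$ the left regular embedding, so that $\bigcup_n H_n \cong H$ by \cite{hall}. Here an automorphism $f$ of $H_n$, viewed as a bijection of the underlying set, acts on $H_{n+1} = \mathrm{Sym}(H_n)$ by conjugation, and the computation $f \lambda_g f^{-1} = \lambda_{f(g)}$ shows this conjugation extends $f$ and is compatible with the tower; since conjugation is a homomorphism, $f \mapsto \hat f$ is manifestly a group homomorphism. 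For general countable $G$ this literal tower fails because $\mathrm{Sym}(G')$ is uncountable, so instead I would build $H^{*} = \bigcup_s H_s$ as a direct limit of countable locally finite groups equipped with compatible $\Gamma$-actions $\rho_s$, processing a bookkeeping list of tasks: (i) embed every finite group, (ii) conjugate every pair of isomorphic finite subgroups, thereby securing (A)--(D). The crucial point is that whenever a task forces us to adjoin a finite subgroup $K$, we adjoin it together with its entire $\Gamma$-orbit and amalgamate, extending $\rho_s$ to the new generators; the amalgam is formed inside the class of locally finite groups (via permutational products, i.e. direct limits of finite amalgams), which keeps each $H_s$ countable and locally finite.

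The main obstacle is precisely this equivariant amalgamation step: $\Gamma = Aut(G')$ is in general uncountable and acts with infinite orbits, so one must amalgamate a countable $\Gamma$-orbit of finite groups over a common finite intersection while (a) preserving local finiteness, (b) preserving countability of $H_s$, and (c) extending the action of all of $\Gamma$ coherently to the adjoined elements. Controlling this—guaranteeing that closing $H_s$ under the infinite family of conjugations $\rho_{s}(\Gamma)$ never destroys local finiteness—is where the real work lies, and this is exactly what can be extracted from the construction of \cite{312}. Granting it, the verification that $H^{*}$ satisfies (A)--(D) and that $\rho$ is a faithful extension of the $G'$-action is routine bookkeeping, and the reduction in the first paragraph then yields the embedding $Aut(G') \hookrightarrow Aut(H)$.
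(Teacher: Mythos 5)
The paper's own ``proof'' of this theorem is nothing more than the citation of Claims 3.13(1) and 3.15 of \cite{312}, and your proposal ultimately rests on exactly the same reference for the one genuinely hard step (the $Aut(G')$-equivariant amalgamation that keeps each stage countable and locally finite while the action extends coherently), so the two arguments are essentially the same. Your added scaffolding --- the reduction via Hall's uniqueness theorem, the observation that injectivity of $f \mapsto \hat{f}$ is automatic from $\hat{f} \restriction G' = f$, and the regular-representation tower $H_{n+1} = Sym(H_n)$ handling the case of finite $G$ --- is correct and is a reasonable reconstruction of what the cited claims supply.
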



In particular, we solve the three open questions of Hickin on $Aut(H)$ from \cite{hickin} (see pg. 227), and give a partial answer to \mbox{Question VI.5 of Kegel and Wehrfritz from \cite{kegel}.}

	After the writing of this paper, thanks to the referee, we discovered that our Theorem \ref{Aut(H)_complete} is implied by a known result, i.e. that non-abelian simple groups have complete automorphism groups (see e.g. \cite{dyer}, where this is attributed to Burnside). In fact, by \cite[Theorem 6.1]{kegel}, $H$ is simple and so by the above we immediately get that $Aut(H)$ is complete. Nonetheless, we believe that our proof is enlightening and that the underlying ideas could be used to establish the completeness of the automorphism groups of other combinatorial and algebraic structures with the so-called strong small index property (cf. Definition \ref{SSIP}).

\section{The Strong Small Index Property for $Aut(H)$}

	In this section we prove Theorems \ref{SSIP} and \ref{univer_auto}.
	
\smallskip

	\begin{proof}[Proof of Theorem \ref{univer_auto}] This is implicitly proved in \cite[Claim 3.13(1) and 3.15]{312}.
\end{proof}

	As an immediate consequence of Theorem \ref{univer_auto}, we answer positively to the first two open questions of Hickin on $Aut(H)$ from \cite{hickin} (see pg. 227). 

	\begin{corollary} \begin{enumerate}[(1)]
	\item $Aut(H)$ embeds the symmetric group $Sym(\omega)$. 
	\item There is an infinite set $S \subseteq H$ such that every permutation of $S$ can be lifted to an automorphism of $H$.
\end{enumerate}
\end{corollary}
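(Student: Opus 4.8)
The plan is to apply Theorem \ref{univer_auto} to a single judiciously chosen countable locally finite group $G$ whose automorphism group visibly contains $Sym(\omega)$, and then to read both statements off the resulting embedding. I would take $G = \bigoplus_{n \in \omega} \langle e_n \rangle$ with each $\langle e_n \rangle \cong C_2$, i.e.\ the countably infinite-dimensional vector space over $\mathbb{F}_2$. This $G$ is abelian, locally finite and countable, so Theorem \ref{univer_auto} applies and yields a copy $G' \cong G$ inside $H$ together with an embedding $\Phi \colon Aut(G') \hookrightarrow Aut(H)$, $f \mapsto \hat{f}$, where each $\hat{f}$ extends $f$.

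For part (1) I would observe that $Sym(\omega)$ embeds into $Aut(G)$: a permutation $\sigma \in Sym(\omega)$ acts on $G$ by $e_n \mapsto e_{\sigma(n)}$, extended additively, and this is a well-defined automorphism since every element of the direct sum has finite support. Distinct permutations induce distinct automorphisms, so $\sigma \mapsto (e_n \mapsto e_{\sigma(n)})$ is an injective homomorphism $Sym(\omega) \hookrightarrow Aut(G)$. Transporting along the fixed isomorphism $G \cong G'$ and composing with $\Phi$ gives an injective homomorphism $Sym(\omega) \hookrightarrow Aut(H)$, as required.

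For part (2), let $\iota \colon G \to G'$ be the fixed isomorphism and set $S = \{\,\iota(e_n) : n \in \omega\,\} \subseteq H$, an infinite set of involutions. Given any permutation $\tau$ of $S$, there is a unique $\sigma \in Sym(\omega)$ with $\tau(\iota(e_n)) = \iota(e_{\sigma(n)})$, and the summand-permutation associated with $\sigma$ as above is an automorphism $f_\tau \in Aut(G')$ whose restriction to $S$ equals $\tau$. Its image $\hat{f_\tau} = \Phi(f_\tau) \in Aut(H)$ extends $f_\tau$, hence also restricts to $\tau$ on $S$; thus every permutation of $S$ lifts to an automorphism of $H$.

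The only point requiring care — and it is the crux of both parts — is choosing $G$ so that permutations of a distinguished generating set genuinely extend to automorphisms of $G$; this is exactly what the direct-sum structure supplies, since relabelling the summands of a direct sum is always an automorphism, even under permutations of infinite support. With this choice both statements follow immediately from Theorem \ref{univer_auto}, and no further structural analysis of $H$ or of $Aut(H)$ is needed.
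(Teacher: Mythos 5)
Your proposal is correct and follows essentially the same route as the paper: both take $G$ to be the countably infinite dimensional vector space over $\mathbb{F}_2$, let $S$ be (the image of) a basis, and obtain both statements by composing the permutation action on the basis with the embedding from Theorem \ref{univer_auto}. The paper's proof is just a terser version of yours.
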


	\begin{proof} Let $G$ be the countably infinite dimensional vector space over the field of order $2$, and $G'$ and $F: f \mapsto \hat{f}$ as in Theorem \ref{univer_auto}. Let $S$ be a basis for $G'$ and $A(S)$ the subgroup of $Aut(G')$ of automorphisms induced by permutations of $S$. Then $F$ witnesses that every permutation of $S$ extends to an automorphism of $H$, and $F \restriction A(S)$ embeds $A(S) \cong Sym(\omega)$ into $Aut(H)$.
\end{proof}

%

\begin{definition}\label{SSIP} Let $M$ be a countable structure and $G = Aut(M)$. We say that $M$ (or $G$) has the {\em small index property} if every subgroup of $Aut(M)$ of index less than $2^{\aleph_0}$ contains the pointwise stabilizer of a finite set $A \subseteq M$.
\end{definition}

	\begin{proof}[Proof of Theorem \ref{SSIP}] We first show that $H$ has the small index property. By \cite[Theorem 6.9]{kechris} it suffices to show that $Aut(H)$ admits ample generics. To see this, by Sections 6.1 and 6.2 of \cite{kechris} it suffices to show that the class of finite groups has the extension property for partial automorphisms and the amalgamation property for automorphisms. The first follows directly from the corollary on pg. 538 of \cite{neumann}, and the second is proved in \cite[Claim 2.8]{312}. The theorem now follows from the small index property, the main result of \cite{ssip_canonical_hom} and \cite[Claim 2.8]{312}.
\end{proof}

\section{Completeness of $Aut(H)$}

	In this section we prove Theorem \ref{Aut(H)_complete}. To prove this we need the technology introduced in \cite{Sh_Pa_recon}, which we briefly review below.
	
\smallskip
	
	Let $H$ be Hall's group and $G = Aut(H)$. We denote by $\mathbf{A}(H) = \{ K \leq_{fin} H \}$ (where $K \leq_{fin} H$ means that $K \leq H$ and $K$ is finite), and by $\mathbf{EA}(H) = \{ (K, L) : K \in \mathbf{A}(H) \text{ and } L \leq Aut(K) \}$.  

	Let $(K, L) \in \mathbf{EA}(H)$, we define:
	$$ G_{(K, L)} = \{ h \in Aut(H) : h \restriction K \in L \}.$$
Notice that if $L = \{ id_K \}$, then  $G_{(K, L)} = G_{(K)}$, i.e. it equals the pointwise stabilizer of $K$, and that if $L = Aut(K)$, then $G_{(K, L)} = G_{\{ K \}}$, i.e. it equals the setwise stabilizer of $K$. We then let:
$$\mathcal{PS}(H) = \{ G_{(K)} : K \in \mathbf{A}(H) \} \; \text{ and } \; \mathcal{SS}(H) =\{ G_{(K, L)} : (K, L) \in \mathbf{EA}(H) \}.$$

Let $\mathbf{L}(H)$ be a set of finite groups such that for every $K \in \mathbf{A}(H)$ there is a unique $L \in \mathbf{L}(H)$ such that $L \cong Aut(K)$.

\begin{definition} We define the structure $ExAut(H)$, the {\em expanded group of automorphisms of $H$}, as follows:
	\begin{enumerate}[(1)]
	\item $ExAut(H)$ is a two-sorted structure;
	\item the first sort has set of elements $Aut(H) = G$;
	\item the second sort has set of elements $\mathbf{EA}(H)$;
	\item we identify $\{ (K, id_K) : K \in \mathbf{A}(H) \}$ with $\mathbf{A}(H)$;
	\item the relations are:
	\begin{enumerate}
	\item $P_{\mathbf{A}(H)} = \{ K \in \mathbf{A}(H)\}$ (recalling the above identification);
	\item for $L \in \mathbf{L}(H)$, $P_{L(H)} = \{ K \in \mathbf{A}(H) : Aut(K) \cong L \}$;
	\item $\leq_{\mathbf{EA}(H)} \; = \{ ((K_1, L_1), (K_2, L_2)): (K_i, L_i) \in \mathbf{EA}(H) \; (i =1,2)\text{, } K_1 \leq K_2 \text{ and } L_2 \restriction K_1 \leq L_1 \}$;
	\item $\leq_{\mathbf{A}(H)} \; = \{ (K_1, K_2): K_i \in \mathbf{A}(H) \; (i =1,2)\text{ and } K_1 \leq K_2 \}$;
	\item $P^{min}_{\mathbf{A}(H)} = \{ K \in \mathbf{A}(H): \{ e \} \neq K \in \mathbf{A}(H) \text{ is minimal in } (\mathbf{A}(H), \subseteq)\}$;
	\end{enumerate}
	\item the operations are:
	\begin{enumerate}[(f)]
	\item composition on $Aut(H)$;
	\end{enumerate}
	\begin{enumerate}[(g)]
	\item for $f \in Aut(H)$ and $K \in \mathbf{A}(H)$, $Op(f, K) = f(K)$;
	\end{enumerate}
	\begin{enumerate}[(h)]
	\item for $f \in Aut(H)$ and $(K_1, L_1) \in \mathbf{EA}(H)$, $Op(f, (K_1, L_1)) = (K_2, L_2)$ iff $f(K_1) = K_2$ and $L_2 = \{ f \restriction K_1 \pi f^{-1} \restriction K_2 : \pi \in L_1\}$.
	\end{enumerate}
	\end{enumerate}
\end{definition}
	
	We say that a set of subsets of a structure $N$ is second-order definable if it is preserved by automorphisms of $N$. We say that a structure $M$ is second-order definable in a structure $N$ if there is a injective map $\mathbf{j}$ mapping $\emptyset$-definable subsets of $M$ to second-order definable set of subsets $N$.

	\begin{theorem}\label{exp_auto_th}
	\begin{enumerate}[(1)]
	\item The map $\mathbf{j}_H = \mathbf{j} : (h, (K, L)) \mapsto (h, G_{(K, L)})$ witnesses second-order definability of $ExAut(H)$ in $Aut(H)$.
	\item Every $f \in Aut(G)$ has an extension $\hat{f} \in Aut(ExAut(H))$.
	\end{enumerate}
\end{theorem}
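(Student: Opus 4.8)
The plan is to realize the entire two-sorted structure $ExAut(H)$ as a configuration of subgroups of $G = Aut(H)$ that is described purely in the group language of $G$, so that every $f \in Aut(G)$ automatically carries it to itself. I begin with the second sort. By Theorem \ref{SSIP} together with the surjectivity of the restriction map $G_{\{K\}} \to Aut(K)$ (which holds by homogeneity of $H$: any $\sigma \in Aut(K)$ is an isomorphism between finite subgroups and so extends to $Aut(H)$), every subgroup of $G$ of index $< 2^{\aleph_0}$ is of the form $G_{(K,L)}$ for a unique $(K,L) \in \mathbf{EA}(H)$, while conversely each $G_{(K,L)}$ has index $< 2^{\aleph_0}$. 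Hence the second-sort part of $\mathbf{j}$, namely $(K,L) \mapsto G_{(K,L)}$, is a bijection onto $\mathcal{SS}(H) = \{\, M \leq G : [G:M] < 2^{\aleph_0} \,\}$, and this target family is visibly invariant under every $f \in Aut(G)$ since such $f$ preserves indices of subgroups. This is the base case of second-order definability and pins down the second sort.

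Next I translate each primitive of $ExAut(H)$ into the group language of $G$ restricted to $\mathcal{SS}(H)$. The order $\leq_{\mathbf{EA}(H)}$ corresponds to reverse inclusion, i.e. $(K_1,L_1) \leq_{\mathbf{EA}(H)} (K_2,L_2)$ iff $G_{(K_2,L_2)} \leq G_{(K_1,L_1)}$ (more constraints give a smaller stabilizer; the converse direction uses homogeneity and $\mathrm{acl}(K) = K$ to produce enough automorphisms fixing $K$ pointwise, forcing $K_1 \leq K_2$ and $L_2{\restriction}K_1 \leq L_1$). Composition is just the group operation on $G$. The operations $Op(g,K)$ and $Op(g,(K,L))$ become conjugation: one checks directly that $G_{(g(K))} = g\,G_{(K)}\,g^{-1}$ and, more generally, $g\,G_{(K,L)}\,g^{-1} = G_{(g(K),\, gLg^{-1})}$, matching clause (h) of the definition. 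Consequently the graph of each operation is invariant under $f \in Aut(G)$, since a group automorphism commutes with conjugation and with multiplication.

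The crux is to recover the pointwise stabilizers $P_{\mathbf{A}(H)}$ group-theoretically; once this is done the remaining predicates follow. My key claim is that for every $M = G_{(K,L)} \in \mathcal{SS}(H)$ the intersection of all finite-index normal subgroups of $M$ equals $G_{(K)}$, the pointwise stabilizer of its base. This rests on a lemma, which I expect to be the main obstacle: every pointwise stabilizer $G_{(K)}$ has no proper subgroup of finite index. This follows by applying the strong small index property to $G_{(K)}$ acting on $H$ over the finitely many constants of $K$, since any proper finite-index subgroup would be a small-index subgroup properly containing some $G_{(K')}$ with $K' \supsetneq K$, hence of infinite index, a contradiction. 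Granting the lemma, every finite-index subgroup of $M$ contains $G_{(K)}$, the finite-index normal ones correspond to the normal subgroups of $M/G_{(K)} \cong L$, and their intersection is trivial in $L$; thus the intersection is exactly $G_{(K)}$. Therefore $M$ is a pointwise stabilizer iff $M$ equals the intersection of its finite-index normal subgroups, an invariant condition, so $P_{\mathbf{A}(H)}$ is second-order definable. Now $\leq_{\mathbf{A}(H)}$ is reverse inclusion restricted to $\mathcal{PS}(H)$; $P^{min}_{\mathbf{A}(H)}$ is the set of maximal proper elements of $(\mathcal{PS}(H), \subseteq)$ (the stabilizers of minimal nontrivial, i.e. prime-order, subgroups); the setwise stabilizer attached to $G_{(K)}$ is its normalizer $N_G(G_{(K)}) = G_{\{K\}}$; and $P_L(H)$ is $\{\, G_{(K)} : N_G(G_{(K)})/G_{(K)} \cong L \,\}$. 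Each is phrased purely in the group language of $G$, completing part (1).

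Finally, part (2) is now formal. Given $f \in Aut(G)$, since $f$ preserves indices it permutes $\mathcal{SS}(H)$, and transporting through the bijection $\mathbf{j}$ yields a permutation $f_*$ of $\mathbf{EA}(H)$; set $\hat{f} = f$ on the first sort and $\hat{f} = f_*$ on the second. Because $f$ preserves inclusion, multiplication, conjugation, finite-index normal cores, normalizers and isomorphism types of quotients, it preserves each of $\leq_{\mathbf{EA}(H)}$, $\leq_{\mathbf{A}(H)}$, $P_{\mathbf{A}(H)}$, $P^{min}_{\mathbf{A}(H)}$, $P_L(H)$ and the three operations; hence $\hat{f} \in Aut(ExAut(H))$ and $\hat{f}$ extends $f$. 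The only genuinely delicate input is the finite-index lemma above, which is exactly where the genericity of $Aut(H)$, equivalently the strong small index property of Theorem \ref{SSIP} applied over finite subgroups, is essential.
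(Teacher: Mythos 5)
Your argument is essentially correct, but it takes a genuinely different route from the paper: the paper proves Theorem \ref{exp_auto_th} in one line, by citing Theorem \ref{SSIP} together with the general reconstruction theorem \cite[Theorem 12]{Sh_Pa_recon}, which packages exactly this statement for arbitrary countable structures with the strong small index property. What you have done is unpack that black box for $H$ itself: identify $\mathcal{SS}(H)$ with the family of subgroups of index $<2^{\aleph_0}$, recover $\leq_{\mathbf{EA}(H)}$ and the operations as reverse inclusion and conjugation, and isolate $\mathcal{PS}(H)$ inside $\mathcal{SS}(H)$ as the subgroups equal to the intersection of their finite-index normal subgroups. This is the mechanism behind the cited theorem, so your proof buys self-containedness at the price of redoing a general lemma in a special case. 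Two points need more care than you give them. First, several steps (injectivity of $(K,L)\mapsto G_{(K,L)}$, the converse direction of the order-reversal, $N_G(G_{(K)})=G_{\{K\}}$, and the finite-index lemma) all rest on the assertion that finite subgroups of $H$ are algebraically closed, i.e.\ that every element outside $K$, and every finite subgroup properly extending $K$, has infinite orbit under $G_{(K)}$. This is true but not free: it requires amalgamating arbitrarily many copies of a finite extension of $K$ over $K$ inside a finite group (Neumann's permutational products \cite{neumann_1}) and then invoking homogeneity; it is precisely the input that the framework of \cite{Sh_Pa_recon} consumes alongside Theorem \ref{SSIP}. Second, your justification of the key finite-index lemma is misstated: a proper finite-index subgroup $N\lneq G_{(K)}$ is not of infinite index \emph{because it contains} some $G_{(K')}$ with $K'\supsetneq K$; rather, Theorem \ref{SSIP} places $N$ \emph{below} $G_{\{K'\}}$, hence inside $G_{\{K'\}}\cap G_{(K)}$, and that subgroup has infinite index in $G_{(K)}$ because the orbit of $K'$ over $K$ is infinite (again the non-algebraicity fact). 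With these repairs your direct argument is complete and, as you suggest, is the kind of argument one would need to adapt the paper's method to other structures with the strong small index property.
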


	\begin{proof} This is because of Theorem \ref{SSIP} and \cite[Theorem 12]{Sh_Pa_recon}.
\end{proof}

	Before proving Theorem \ref{Aut(H)_complete} we need a crucial lemma.

	\begin{lemma}\label{crucial_lemma} Let $K_1, K_2 \leq_{fin} H$ realizing the same quantifier-free type in $ExAut(H)$.
	\begin{enumerate}[(1)]
	\item If $K_1$ has prime order, then $K_1 \cong K_2$.
	\item If $K_1$ is abelian, then so is $K_2$.
	\item If $K_1$ is cyclic, then so is $K_2$.
	\item If $K_1$ is cyclic of order $n$, then $K_1 \cong K_2$.
	\item $K_1$ and $K_2$ have the same order.
	\item If $K_1$ and $K_2$ are with no center and $K_1$ is complete, then $K_1 \cong K_2$.
	\item If $K_1$ has no characteristic subgroup, then so does $K_2$.
	\item If $K_1$ is the alternating group on $n > 6$, then $K_1 \cong K_2$.
\end{enumerate}
\end{lemma}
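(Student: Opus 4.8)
The plan is to read the hypothesis through its intended use, where the equality of quantifier-free types is witnessed by an $\hat f \in Aut(ExAut(H))$ with $\hat f(K_1)=K_2$ (Theorem \ref{exp_auto_th}(2)), and then to translate each clause into a purely group-theoretic recognition statement. Since $\hat f$ preserves the predicate $P_{L(H)}$ I immediately get $Aut(K_1) \cong Aut(K_2)$, and since it preserves $\leq_{\mathbf{A}(H)}$ it restricts to an isomorphism $\Phi$ of the subgroup lattice of $K_1$ onto that of $K_2$ sending each $K' \leq K_1$ to a subgroup $\Phi(K')$ of the same quantifier-free type (so the whole lemma applies recursively to $(K',\Phi(K'))$). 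Preservation of $P^{min}_{\mathbf{A}(H)}$ together with $P_{L(H)}$ makes $\Phi$ carry prime-order subgroups to prime-order subgroups of the same prime, since $Aut(\mathbb{Z}/p) \cong \mathbb{Z}/(p-1)$ determines $p$; and preservation of the distinguished pairs $(K,id_K)$, $(K,Aut(K))$ together with the operation $Op(f,-)$ makes $\Phi$ intertwine the action of $Aut(K_1)$ on the subgroups of $K_1$ with that of $Aut(K_2)$ on the subgroups of $K_2$.

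With these tools the clauses fall out as follows. Clause (1) is immediate: $K_1$ of prime order lies in $P^{min}_{\mathbf{A}(H)}$, so $K_2$ does, and $\mathbb{Z}/(p-1) \cong Aut(K_1) \cong Aut(K_2) \cong \mathbb{Z}/(q-1)$ forces $p=q$. For (3) I invoke Ore's theorem that a finite group is cyclic iff its subgroup lattice is distributive; as $\Phi$ is a lattice isomorphism, the subgroup lattice of $K_2$ is distributive and $K_2$ is cyclic. For (5) I locate Sylow subgroups lattice-theoretically: a subgroup is a $p$-group iff all its atoms have order $p$ (preserved by $\Phi$), a Sylow $p$-subgroup is a maximal such, and in a $p$-group every maximal chain of subgroups has length $\log_p$ of the order; since $\Phi$ preserves $p$-subgroups, maximality and chain length, corresponding Sylow subgroups have equal order, and multiplying over $p$ gives $|K_1|=|K_2|$. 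Clause (4) then follows from (3) and (5). For (7) I use that characteristic subgroups are exactly the $Aut(K)$-invariant ones, and $\Phi$ intertwines the two actions, so $K_1$ has a proper nontrivial characteristic subgroup iff $K_2$ does.

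Clauses (6) and (8) combine (5) with the fact that $Inn(K)$ is characteristic in $Aut(K)$. In (6), $K_1$ being complete with no center gives $|Aut(K_2)|=|Aut(K_1)|=|K_1|=|K_2|$; since $K_2$ has no center, $Inn(K_2)=K_2$ has order $|K_2|=|Aut(K_2)|$, so $Inn(K_2)=Aut(K_2)$, $K_2$ is complete, and $K_2 \cong Aut(K_2) \cong Aut(K_1) \cong K_1$. In (8), for $n>6$ we have $Aut(K_2) \cong Aut(A_n) \cong S_n$ and $|K_2|=n!/2$; the image of $Inn(K_2)$ under an isomorphism $Aut(K_2)\cong S_n$ is a characteristic subgroup of $S_n$, hence $1$, $A_n$ or $S_n$. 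It is nontrivial because $K_2$ is non-abelian by (2), and it cannot be $S_n$ for order reasons, so $Inn(K_2)\cong A_n$ has order $|K_2|$, forcing $Z(K_2)=1$ and $K_2 \cong A_n$.

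The hard clause is (2), and this is where I expect the main obstacle. The natural argument shows that $K_2$ is a Dedekind group (every subgroup normal) and then excludes the Hamiltonian case by noting that the subgroup lattice of $Q_8$ (one atom, three subgroups of order $4$) is not the subgroup lattice of any abelian group of order $8$, so the $\Phi$-preimage of a copy of $Q_8$ inside $K_2$ could not lie in the abelian $K_1$. The difficulty is that this route needs $\Phi$ to preserve normal, not merely characteristic, subgroups, i.e. to recover the conjugation action of $K$ on its own subgroups from inside $ExAut(H)$; equivalently one must pin down $Inn(K)$ inside $Aut(K)=G_{\{K\}}/G_{(K)}$. I cannot simply quote that $Inn(H)$ is characteristic in $Aut(H)$ without risking circularity with Theorem \ref{radical}, so the crux will be to read off $Inn(K)$, and hence normality, directly from the operations $Op(f,-)$ and the pair ordering $\leq_{\mathbf{EA}(H)}$. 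This is the step I would spend the most care on; granting it, the Dedekind argument and the lattice comparison for $Q_8$ are routine. It is worth noting that weaker data genuinely fail here, since the modular $p$-groups $M_{p^n}$ are lattice-isomorphic to abelian groups with the same subgroup orders, and only their non-normal subgroups distinguish them.
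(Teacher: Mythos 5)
Your write-up handles clauses (1), (3), (4), (5), (6) and (7) correctly, and in several places by routes genuinely different from (and arguably cleaner than) the paper's: you get (3) from Ore's characterization of cyclic groups by distributive subgroup lattices, (5) by locating Sylow subgroups lattice-theoretically and counting maximal chain lengths (the paper instead sums Euler totients over cyclic subgroups, which forces it to route (5) through (4) and hence through (2)), and (8) from the normal subgroup structure of $S_n \cong Aut(A_n)$ rather than the paper's construction of a centerless overgroup $K_2^+$ of index $2$. The identification of $\hat f$ restricted to the subgroups of $K_1$ as a projectivity preserving the orders of atoms and intertwining the $Aut(K_i)$-actions is sound and is exactly the mechanism the paper uses implicitly.

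However, there is a genuine gap: you never prove clause (2). You correctly diagnose that lattice data plus characteristic subgroups cannot detect abelianness (the modular groups $M_{p^n}$ are the right counterexample), reduce the problem to recovering normality, i.e. $Inn(K)$ inside $Aut(K)$, from the structure, and then explicitly defer that step (``granting it\dots''). Since (2) is itself one of the eight clauses, and your (8) invokes (2) to rule out $Inn(K_2)=1$, the proposal as written is incomplete. The paper avoids the normality issue entirely by exploiting the second sort $\mathbf{EA}(H)$ and the relation $\leq_{\mathbf{EA}(H)}$: it characterizes abelian $K$ by the existence of a cyclic $L_1 \leq Aut(K)$ and a finite overgroup $K^* \geq K$ in $H$ such that no $L_2 \leq Aut(K^*)$ restricts onto $L_1$ --- a statement about \emph{extendability of automorphism groups along extensions}, which is visible in $ExAut(H)$, rather than about conjugation inside $K$. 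If you want to keep your architecture, note that (8) at least can be decoupled from (2): by your clause (7), $K_2$ has no proper nontrivial characteristic subgroup, and a finite abelian group with that property is elementary abelian, hence of prime-power order, whereas $|K_2| = n!/2$ is not a prime power for $n>6$; so $Inn(K_2) \neq 1$ follows without (2). But clause (2) itself still needs an argument of the paper's type (or a genuine recovery of $Inn(K)$ from $ExAut(H)$), and as it stands that is missing.
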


	\begin{proof} (1) As groups of prime order are the only groups without non-trivial subgroups, and if $p \neq q$ are prime then $Aut(C_p) \not\cong Aut(C_q)$.
	
\smallskip

\noindent (2)  A finite group $K$ is abelian if and only if there is cyclic $L_1 \leq Aut(K)$ and $K^* \geq K$ such that for no $L_2 \leq Aut(K^*)$ we have $\{ f \restriction K : f \in L_2\} = L_1$.

\smallskip

\noindent (3)  A finite group $K$ is cyclic if and only it is abelian and there is a finite number of primes $P$ such that for every $p \in P$ there is a unique $K_1 \leq K$ of order $p$.

\smallskip

\noindent (4) By (4) it suffice to define $|K|$ for cyclic $K$. Let $|K| = \prod_{i < k} p_i^{n_i}$, for $(p_i)_{i < k}$ a sequence of primes with no repetitions and $n_i \geq 1$. Notice now the following:
\begin{enumerate}[(i)]
\item We can define $\{ p_i : i < k \}$;
\item For every $i < k$, we can define $\{ K' \leq K : p \mid |K'| \}$.
\item For every $i < k$, $|\{ K' \leq K : p_i \mid |K'| \}| = n_i$.
\end{enumerate}

\smallskip

\noindent (5) If $K_1$ is a finite group, then $|K_1| = 1 + \sum \{ m_{K} : K \leq K_1 \text{ cyclic} \}$, where, if $|K| = n$, $m_{K} = |\{a \in \{ 1, ..., n-1 \} : (a, n) = 1 \}$. Thus, by (4) we are done.

\smallskip

\noindent (6) By the choice of $ExAut(H)$ we have $Aut(K_1) \cong Aut(K_2)$. By (5) we have $|K_1| = |K_2|$. Hence, since $K_1$ is complete, $|Aut(K_2)| = |Aut(K_1)| = |K_1| = |K_2|$. Since $K_2$ is centerless we have $K_2 \cong Aut(K_2)$, and so we are done.

\smallskip

\noindent (7) By the choice of $ExAut(H)$ (cf. the operation $Op$).

\smallskip

\noindent (8) Since $K_1$ is the alternating group on $n > 6$, $K_1$ has no characteristic subgroup. Thus, by (7), also $K_2$ does not have a characteristic subgroup. Furthermore, by the proof of (2) and the fact that $K_1$ is not abelian, we have that $K_2$ is not abelian either. Hence, the center of $K_2$ is properly contained in $K_2$, and so it is the identity, since $K_2$ has no characteristic subgroup. Let $\pi_0: Alt(n) \cong K_1$, $\pi_1$ be an embedding of $Sym(n)$ into $H$ extending $\pi_0$ and $K_1^+ = ran(\pi_1)$. Let $K^+_2 \in \mathbf{A}(H)$ be such that $K_2 \leq K^+_2$, and $(K_1, K^+_1)$ and $(K_2, K^+_2)$ realize the same type. In particular, $|K^+_1| = |K^+_2|$ and  $[K^+_2 : K_2] = 2$. We claim that $K^+_2$ is centerless. In fact, suppose otherwise, and let $K^+_1 \leq K^{++}_1 \leq H$ be such that $K^{++}_1 = K^{+}_1 \oplus K_0$ with $|K_0| = 2$. Then $Aut(K^{++}_1) \cong Aut(K^{+}_1)$. But $K^+_2$ does not have such an extension, which contradicts the choice of $K^+_2$. Hence, $K^+_2$ is centerless, and so, by (6) and the fact that $n > 6$, there exists $\pi: K^+_1 \cong K^+_2$. Now, $K^+_1$ has a unique subgroup of index $2$, and so the same holds for $K^+_2$. Hence, $\pi$ has to map $K_1$ onto $K_2$.

\end{proof}

	We now prove Theorem \ref{Aut(H)_complete}.

	\begin{proof}[Proof of Theorem \ref{Aut(H)_complete}] Let $f \in Aut(G)$ and $\hat{f}$ the corresponding extension of $f$ to $Aut(ExAut(H))$. Now, $\hat{f}$ maps $P^{min}_{\mathbf{A}(H)} \cap P_{e(H)}$ onto itself, where $ e $ denotes the trivial group. Clearly,
$$P^{min}_{\mathbf{A}(H)} \cap P_{e(H)} = \{ K \leq H : |K| = 2 \},$$
since the groups of order $2$ are the only rigid groups without non-trivial subgroups.
Thus, $\hat{f}$ induces a permutation $g_1$ of $\mathcal{X}_2(H) = \{ x \in H : x \text{ has order } 2 \}$.
	
	\begin{claim} The map $g_1$ can be extended to a $g_2 \in G$.
\end{claim}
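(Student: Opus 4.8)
The plan is to build $g_2$ as a direct limit of isomorphisms between finite subgroups. The key preliminary observation is that $H$ is generated by its involutions: every finite subgroup embeds, by Cayley's theorem, into some $Sym(n)$, which is generated by transpositions, and by universality and conjugacy such a $Sym(n)$ can be realized inside $H$. Building a chain inductively, one obtains finite subgroups $K_0 \leq K_1 \leq \cdots$ with $K_i \cong Sym(n_i)$ and $\bigcup_i K_i = H$. Since the involutions generate $H$, any automorphism is determined by its restriction to $\mathcal{X}_2(H)$, so the desired $g_2$ is unique if it exists; the content is existence.

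As $\widehat{f}$ preserves $P_{\mathbf{A}(H)}$ and $\leq_{\mathbf{A}(H)}$, it is an automorphism of the lattice $(\mathbf{A}(H), \leq)$ and hence preserves finite joins, i.e. generated subgroups: $\widehat{f}(\langle x_1, \ldots, x_m \rangle) = \langle g_1(x_1), \ldots, g_1(x_m) \rangle$ for involutions $x_j$. I would thus reduce the Claim to showing that for each $K_i$ the assignment $x \mapsto g_1(x)$ on involutions extends to an isomorphism $\phi_i : K_i \cong \widehat{f}(K_i)$. Coherence $\phi_{i+1} \restriction K_i = \phi_i$ is then automatic, since both sides are isomorphisms $K_i \to \widehat{f}(K_i)$ agreeing with $g_1$ on the involutions that generate $K_i$, and such an isomorphism is unique. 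Hence $g_2 := \bigcup_i \phi_i$ is a well-defined bijection of $H$ (its image chain $\widehat{f}(K_i)$ is again cofinal, as $\widehat{f}$ is a lattice automorphism), it is a homomorphism because each $\phi_i$ is, and $g_2 \restriction \mathcal{X}_2(H) = g_1$; applying the same argument to $f^{-1}$ identifies the inverse, so $g_2 \in G$.

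Several structural features of $g_1$ fall out directly and will be used inside each $K_i$. If $x, y$ are distinct commuting involutions, then $\langle x, y \rangle$ is a Klein four group, and by Lemma \ref{crucial_lemma}(2),(5) its image is abelian of order $4$ with three involutions, hence Klein four; as $\widehat{f}$ sends the three order-$2$ subgroups to the three order-$2$ subgroups, we get $g_1(xy) = g_1(x) g_1(y)$ and that $g_1(x), g_1(y)$ commute. In particular $g_1$ restricts to an isomorphism on every elementary abelian $2$-group. For non-commuting involutions $x, y$ the group $\langle x, y \rangle$ is dihedral of order $2r$, with $r$ the order of $xy$; since $\widehat{f}(\langle x, y \rangle) = \langle g_1(x), g_1(y) \rangle$ is generated by two involutions and has order $2r$ by Lemma \ref{crucial_lemma}(5), it is dihedral of order $2r$, so $g_1(x) g_1(y)$ has order $r$. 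Thus $g_1$ preserves commuting, products of commuting involutions, and the order of every product of two involutions.

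The main obstacle is that these pairwise data need not force $x \mapsto g_1(x)$ to extend to a homomorphism of the non-abelian group $K_i = Sym(n_i)$, which carries relations involving more than two generators. To finish I would invoke the rigidity of symmetric groups together with the operation $Op$. For $n_i > 6$, $Sym(n_i)$ is complete and centerless, so Lemma \ref{crucial_lemma}(5),(6) provides an abstract isomorphism $\widehat{f}(K_i) \cong K_i$; and preservation of $Op$ yields the equivariance $g_1(h(x)) = f(h)(g_1(x))$ for every $h \in G$ and every involution $x$. The crux — and the step I expect to be hardest — is to combine these to show that the abstract isomorphism can be chosen to agree with $g_1$ on the (definable) conjugacy class of transpositions, using that for $n \neq 6$ every automorphism of $Sym(n)$ is inner, so that class cannot be scrambled; Lemma \ref{crucial_lemma}(8), which keeps the alternating subgroup and its structure rigid, is what underwrites this. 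Once the isomorphism is known to agree with $g_1$ on involutions it is the required $\phi_i$, and the construction of $g_2$ goes through.
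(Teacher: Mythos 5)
Your skeleton is the paper's: exhaust $H$ by finite subgroups containing the given involutions, use that $\hat{f}$ preserves generated subgroups so that $\hat{f}(\langle x_1,\ldots,x_m\rangle)=\langle g_1(x_1),\ldots,g_1(x_m)\rangle$, and invoke the rigidity clauses of Lemma \ref{crucial_lemma} (the paper does this per finite tuple, embedding $K_0=\langle x_1,\ldots,x_n\rangle$ into a copy of the alternating group on $2|K_0|$ letters and applying clause (8)). Your preliminary observations about $g_1$ (preservation of commutation, of products of commuting involutions, and of the order of $xy$, via Klein four and dihedral subgroups) are correct and useful. But the proof is not complete, and you say so yourself: the step you call the crux --- upgrading the \emph{abstract} isomorphism $K_i\cong\hat{f}(K_i)$ to one agreeing with $g_1$ on the involutions of $K_i$ --- is precisely the content of the Claim, and the argument you gesture at does not do it. That every automorphism of $Sym(n)$, $n\neq 6$, is inner only says that any two isomorphisms $K_i\to\hat{f}(K_i)$ differ by a conjugation, hence that the class of transpositions is preserved \emph{setwise}; it gives no control over where an individual involution $x$ lands and so cannot force agreement with $g_1(x)$. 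What is needed, and missing, is an argument that the data preserved by $\hat{f}$ --- in particular $\hat{f}(\langle x\rangle)=\langle g_1(x)\rangle$ for every involution $x$, together with all subgroup inclusions --- pins the isomorphism down elementwise on $\mathcal{X}_2(K_i)$.

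Two further problems. First, your appeal to Lemma \ref{crucial_lemma}(6) for $K_i\cong Sym(n_i)$ is not licensed as stated: clause (6) requires \emph{both} groups to be centerless, and you have not shown $\hat{f}(K_i)$ is centerless. The paper gets centerlessness of the image from clause (7) because $Alt(n)$ has no proper non-trivial characteristic subgroup; $Sym(n)$ has the characteristic subgroup $Alt(n)$, so that route is closed to your chain of symmetric groups. Second, for $g_2=\bigcup_i\phi_i$ to restrict to $g_1$ on all of $\mathcal{X}_2(H)$ you need $\phi_i$ to agree with $g_1$ on \emph{every} involution of $K_i$, not merely on a generating set: an involution that is a product of several disjoint transpositions is not handled by agreement on generators. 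The paper's formulation, which quantifies over arbitrary finite tuples of involutions and demands $h(x_i)=g_1(x_i)$ for each member of the tuple, sidesteps this; your chain formulation would need the stronger pointwise statement on each $K_i$.
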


	\begin{claimproof} As $\mathcal{X}_2(H)$ generates $H$, it suffices to prove that if 
$x_1, ..., x_n \in \mathcal{X}_2(H)$ for $n > 3$, then there are $K_1, K_2 \leq_{fin} H$ such that:
\begin{enumerate}[(i)]
\item $x_1, ..., x_n \in K_1$;
\item $g_1(x_1), ..., g_1(x_{n}) \in K_2$;
\item there is an isomorphism $h$ from $K_1$ onto $K_2$ such that $\bigwedge_{0 < i \leq n} h(x_i) = g_1(x_i)$.
\end{enumerate}
Let $K_0$ be the subgroup of $H$ generated by  $\{ x_1, ..., x_n\}$ and $n_{*} = 2 |K_0|$. Then we can find $K_1 \geq K_0$ which is isomorphic to the alternating group on $n_*$. Thus, by Lemma \ref{crucial_lemma}, letting $K_2 = \hat{f}(K_1)$ we are done.
\end{claimproof}

\noindent Let $f_1 \in Aut(G)$ be such that $h \mapsto g_2 h g_2^{-1}$. We claim that $f_2 := f_1^{-1}f = id_G$. Towards contradiction, suppose there exists $h_1 \in G$ such that $h_2 := f_2(h_1) \neq h_1$. Since $\mathcal{X}_2(H)$ generates $H$, we can find $x_0 \in \mathcal{X}_2(H)$ such that:
$$x_1 : = h_1(x_0) \neq h_2(x_0) : = x_2.$$
Thus,
\[ \begin{array}{rcl}
	h_1G_{\{ e, x_0 \}}h_1^{-1} = G_{\{ e, x_1 \}} & 
	  \Rightarrow & f_2(h_1)f_2(G_{\{ e, x_0 \}})f_2(h_1^{-1}) = f_2(G_{\{ e, x_1 \}}) \\
	& \Rightarrow & h_2G_{\{ e, x_0 \}}h_2^{-1} = G_{\{ e, x_1 \}}\\
	& \Rightarrow & h_2(x_0) = x_1, 
\end{array}	\]
which is absurd. Hence, $f_2 = id_G$, and so $f = f_1 \in Inn(G)$, as wanted.
\end{proof}

\section{$Inn(H)$ is the Locally Finite Radical of $Aut(H)$}

	In this section we prove Theorem \ref{radical}, which solves the third question of Hickin\footnote{According to Hickin this question was posed by J. E. Roseblade, see \cite{hickin} pg. 227.} on $Aut(H)$ from \cite{hickin} (see pg. 227). We first need some facts and a proposition.

	\begin{fact}[\cite{centra}]\label{centralizers} Let $K \leq_{fin} H$. Then $\mathbf{C}_H(K)$ is isomorphic to an extension of $Z(K)$ by $H$ (i.e. $\mathbf{C}_H(K) / Z(K) \cong H$).
\end{fact}

	\begin{fact}[\cite{hickin_auto}{[Lemma 2.3]}]\label{hickin_fact} Let $A \lneq B$ and $C$ be finitely generated subgroups of an algebraically closed group $G$ and $f \in Aut(G) - Inn(G)$. Then there exists in $G$ an isomorphic copy $B'$ of $B$ over $A$ (i.e. $a' = a$ for every $a \in A$) such that $f(B') \not\subseteq \langle B', C \rangle_{G}$.
\end{fact}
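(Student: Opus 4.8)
The plan is to argue by contradiction: assuming that $f(B') \subseteq \langle B', C\rangle_{G}$ for \emph{every} isomorphic copy $B'$ of $B$ over $A$, I would show that $f \in Inn(G)$, contradicting the hypothesis $f \in Aut(G) - Inn(G)$. Two structural features of algebraically closed groups drive the argument. First, $G$ is existentially closed, so any isomorphism between finitely generated subgroups of $G$ is induced by conjugation by an element of $G$: one forms the HNN extension realizing the isomorphism, notes that $G$ embeds in it, and pulls the conjugating letter back into $G$ by existential closedness. Applied to $f$ itself, this gives, for every finitely generated $U \leq G$, an element $g_U \in G$ with $f(u) = g_U u g_U^{-1}$ for all $u \in U$; the set of such conjugators is the coset $g_U \mathbf{C}_G(U)$, and $f \in Inn(G)$ precisely when $\bigcap_U g_U \mathbf{C}_G(U) \neq \emptyset$. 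Second, by the same HNN criterion, every copy $B'$ of $B$ over $A$ has the form $B' = g B g^{-1}$ for some $g \in \mathbf{C}_G(A)$ (the conjugator fixes $A$ pointwise exactly because the copy is taken over $A$); moreover $\mathbf{C}_G(A)$ is itself large, an analogue of Fact \ref{centralizers}, so there are abundant independent such copies, and since $A \lneq B$ these copies genuinely vary.

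Next I would exploit the hypothesis through pairs of independent generic copies. Using existential closedness I would realize inside $G$ two copies $B_1 = g_1 B g_1^{-1}$ and $B_2 = g_2 B g_2^{-1}$ over $A$ whose join is the amalgamated free product $B_1 *_A B_2$ and which are in general position with respect to $C$, so that $\langle B_1, C\rangle_{G} \cap \langle B_2, C\rangle_{G} \subseteq \langle A, C\rangle_{G}$. The hypothesis yields $f(B_i) \subseteq \langle B_i, C\rangle_{G}$, hence local conjugators $h_i$ realizing $f$ on $\langle B_i, C\rangle_{G}$; as both subgroups contain $\langle A, C\rangle_{G}$, the element $h_1^{-1} h_2$ centralizes $\langle A, C\rangle_{G}$. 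The role of the controlled intersection is that it forces the local conjugators attached to different generic copies to be compatible on the common part and on the amalgam $\langle B_1, B_2, C\rangle_{G}$. Iterating over an increasing cofinal chain of finitely generated subgroups built from such generic copies (which exhausts $G$), I would patch the compatible local conjugators into a single $g \in \bigcap_U g_U \mathbf{C}_G(U)$, i.e. into a witness that $f$ is inner.

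The main obstacle is exactly this passage from local to global. The centralizer cosets $g_U \mathbf{C}_G(U)$ shrink as $U$ grows, and a genuinely non-inner automorphism is precisely one for which they fail to have a common point, so the entire force of the hypothesis must be spent on preventing this drift. Making the patching rigorous requires (i) realizing enough independent copies inside $G$, where one must work with finite fragments of the (possibly non-finitely-presented) amalgams and invoke existential closedness on each, and (ii) a compactness/limit argument along the directed system of finitely generated subgroups to extract the global conjugator from the coherent local data; I expect (ii) to be the hard step. As a more hands-on alternative I would instead fix one $g$ realizing $f$ on $\langle B, C\rangle_{G}$, use $f \notin Inn(G)$ to produce a generic $c \in \mathbf{C}_G(A)$ on which $g$ fails to realize $f$, i.e. $f(c) \neq g c g^{-1}$, and then verify that the copy $B' = c B c^{-1}$ satisfies $f(B') = (f(c) g)\, B\, (f(c) g)^{-1} \not\subseteq \langle c B c^{-1}, C\rangle_{G}$; here the delicate point is the genericity of $c$, which makes the conjugate of $B$ point in a new direction and escape.
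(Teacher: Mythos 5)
The paper does not prove this statement: it is imported verbatim as a Fact from Hickin \cite{hickin_auto}, Lemma 2.3, so there is no in-paper argument to compare yours against. Judged on its own, your proposal assembles the right standard ingredients --- that in an e.c.\ group every isomorphism between finitely generated subgroups is realized by conjugation (via an HNN extension and existential closedness), that the local conjugators for $f$ on $U$ form a coset $g_U\mathbf{C}_G(U)$, that $f$ is inner iff $\bigcap_U g_U\mathbf{C}_G(U)\neq\emptyset$, and that the copies of $B$ over $A$ are exactly the $cBc^{-1}$ with $c\in\mathbf{C}_G(A)$ --- but it does not contain a proof. The step you yourself flag as hard is the entire content of the lemma, and your sketch gives no mechanism for it. Every automorphism of an e.c.\ group is locally inner, so the family $\{g_U\mathbf{C}_G(U)\}$ is always a downward-directed family of nonempty cosets; non-innerness means precisely that this family has empty intersection, and there is no compactness in $G$ that would let you ``pass to the limit'' from coherent local conjugators. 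So the patching argument is exactly the step that fails for a general locally inner, non-inner automorphism, and you do not explain how the hypothesis $f(B')\subseteq\langle B',C\rangle_G$ for all $B'$ is to be converted into a global conjugator. Even granting the patching, the subgroups $\langle B_1,\dots,B_n,C\rangle_G$ built from copies of $B$ over $A$ need not be cofinal among finitely generated subgroups of $G$, so at best you would conclude that $f$ is inner on a proper subgroup, not on $G$.

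The ``hands-on alternative'' has a concrete gap of the same nature. Writing $B'=cBc^{-1}$ and $d=f(c)gc^{-1}$, one has $f(B')=dB'd^{-1}$, and what is needed is a $c\in\mathbf{C}_G(A)$ for which $dB'd^{-1}\not\subseteq\langle B',C\rangle_G$; the condition $f(c)\neq gcg^{-1}$ you propose says only that $d$ differs from a particular element and gives no control over where $d$ sends $B'$ (for instance it is entirely compatible with $d\in\langle B',C\rangle_G$). Producing such a $c$ is essentially a restatement of the lemma, so nothing has been reduced. A further technical gap: when $B$ is finitely generated but infinite, realizing inside $G$ two copies with $\langle B_1,B_2\rangle_G\cong B_1*_AB_2$ and with $\langle B_1,C\rangle_G\cap\langle B_2,C\rangle_G\subseteq\langle A,C\rangle_G$ is not a finite system of equations and inequations, so existential closedness does not directly provide it; also note that $A\lneq B$ is used nowhere in your sketch, whereas the statement is false without it (if $A=B$ then $B'=B$ is the only copy over $A$ and one may have $f(B)\subseteq\langle B,C\rangle_G$). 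You should either consult Hickin's proof or supply the missing construction; as written, the proposal is a plan with the decisive step absent.
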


	\begin{proposition}\label{crucial_prop} Let $f \in Aut(H) - Inn(H)$ be of finite order $n < \omega$, and $K \leq_{fin} H$. Then there are commuting $a \neq b \in \mathbf{C}_H(K) - K$ of order $2$ such that $f(a) = b$.
\end{proposition}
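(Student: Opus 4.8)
The goal is to produce, for a fixed-point-free-on-$\langle\text{this level}\rangle$ automorphism $f$ of finite order $n$ that is not inner, two commuting involutions $a \neq b$ in the centralizer $\mathbf{C}_H(K)$ (and outside $K$) with $f(a) = b$. The plan is to exploit the fact that $f$ is \emph{not} inner through Fact \ref{hickin_fact}, which says that in an algebraically closed group (and $H$ is algebraically closed among locally finite groups in the relevant sense) we can always find a fresh conjugate of a larger group that $f$ pushes outside any prescribed finitely generated subgroup. The idea is to set up the data of Fact \ref{hickin_fact} so that the ``escaping'' element it produces can be combined with its $f$-image to manufacture a pair of commuting involutions.

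First I would arrange the target of the construction inside the centralizer. Since we want $a, b \in \mathbf{C}_H(K)$, I would work throughout inside $\mathbf{C}_H(K)$, using Fact \ref{centralizers} to know that $\mathbf{C}_H(K)/Z(K) \cong H$, so that $\mathbf{C}_H(K)$ is itself a copy of Hall's group up to a finite center and thus still algebraically closed and rich enough to host all the finite configurations we need. Concretely, I would pick inside $\mathbf{C}_H(K)$ an involution $a_0$ and then, applying Fact \ref{hickin_fact} with $A$ the relevant base, $B = \langle A, a_0\rangle$, and $C$ chosen to contain $K$ together with the finitely many ``old'' elements we must avoid, obtain a fresh copy $a$ of $a_0$ such that $f(a)$ lies outside $\langle a, C\rangle_H$. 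Setting $b = f(a)$ gives us an involution (since $f$ preserves order) with $b \neq a$ and $b \notin K$; the non-containment guarantees genuine freshness.

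The main obstacle I anticipate is \emph{commutativity}: Fact \ref{hickin_fact} controls where $f(a)$ sits but not whether $a$ and $b = f(a)$ commute. To force $[a,b] = e$ I would not apply the fact to a single involution but to an abelian group of involutions (a copy of $C_2 \times C_2$, or of $C_2^k$) already commuting with $K$, chosen so that $f$ maps one involution of it to another while respecting the abelian structure; alternatively I would iterate the escaping construction to produce $a$, then $f(a), f^2(a), \dots, f^{n-1}(a)$, and then pass to a further amalgam in which all these commute. Here the finiteness of the order $n$ of $f$ is crucial: the orbit $\{f^i(a)\}_{i<n}$ is finite, so by homogeneity of $H$ (property of being a homogeneous structure, together with the conjugacy of isomorphic finite subgroups) one can embed the whole orbit into a single finite elementary abelian $2$-group commuting with $K$, and within that group $a$ and $b = f(a)$ automatically commute. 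The remaining point is to ensure $a \neq b$, i.e.\ that $f$ does not already fix $a$; this is exactly what the escaping clause $f(a) \notin \langle a, C\rangle$ of Fact \ref{hickin_fact} secures once $C$ is taken to contain $a$'s intended ``home'', so that $f(a) = a$ is impossible.

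Assembling these, the argument runs: work in $\mathbf{C}_H(K)$; use Fact \ref{hickin_fact} (with $f$ non-inner) to produce an involution $a$ that $f$ moves off any chosen finitely generated subgroup $C \supseteq K$; set $b = f(a)$, noting $b$ is an involution with $b \neq a$ and $a, b \notin K$ by the non-containment; and finally invoke homogeneity of $H$ to place the finite $f$-orbit of $a$ inside a finite elementary abelian $2$-subgroup of $\mathbf{C}_H(K)$, forcing $[a,b]=e$. The delicate step to get right is the \emph{simultaneous} satisfaction of (i) freshness/escaping, (ii) membership in $\mathbf{C}_H(K) - K$, and (iii) commutativity of $a$ with $b$, since naively optimizing one can spoil another; the use of amalgamation and homogeneity to reconcile them is where the real work lies.
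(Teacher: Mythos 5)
Your proposal correctly isolates the two ingredients --- Hickin's escaping lemma (Fact \ref{hickin_fact}) for freshness and some device for commutativity --- but the step where you actually obtain $[a,b]=e$ is the one that does not work, and it is precisely the step the paper's proof is built around. You cannot ``embed the whole $f$-orbit of $a$ into a finite elementary abelian $2$-group by homogeneity'': $f$ is a fixed automorphism handed to you, so once $a$ is chosen the isomorphism type of $\langle a, f(a)\rangle$ is determined, and no amalgamation performed afterwards can alter whether $a$ and $f(a)$ commute. Replacing $a$ by a conjugate $a^{g}$ does not help either, since $f(a^{g})=f(a)^{f(g)}$ rather than $f(a)^{g}$, so you cannot simultaneously conjugate the orbit into a prescribed abelian configuration. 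Likewise your first alternative --- applying the fact to a copy of $C_2\times C_2$ ``chosen so that $f$ maps one involution of it to another'' --- assumes exactly what is to be proved. The paper's mechanism is different and is the real content of the proof: it pulls the desired relation back through $f$. One fixes an involution $a\in\mathbf{C}_H(K)-K$, forms the finite $f$-invariant group $A=\langle f^{\pm i}(a),f^{\pm i}(K):i<n\rangle$ (this is where the finite order of $f$ enters), and chooses a new involution $b''$ centralizing $\langle f^{-1}(K),f^{-1}(a)\rangle\leq A$; after Fact \ref{hickin_fact} replaces $B=\langle A,b''\rangle$ by a copy $B'=\langle A,b'\rangle$ over $A$, the element $b:=f(b')$ centralizes $\langle K,a\rangle$, because centralizing $f^{-1}(X)$ is pushed forward by $f$ to centralizing $X$. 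Commutativity is thus arranged \emph{before} $f$ is applied, not repaired afterwards.

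Two further gaps. First, Fact \ref{hickin_fact} only asserts $f(B')\not\subseteq\langle B',C\rangle_G$ for the \emph{group} $B'$; to conclude that $f$ of your one designated involution escapes (hence is distinct from it and lies outside $K$) you need the base $A$ to satisfy $f(A)\subseteq A$, so that writing $B'=\langle A,b'\rangle$ forces the escaping element to be $f(b')$ itself. Your sketch does not close the base under $f^{\pm1}$, so in particular your argument that $f(a)\neq a$ does not go through as stated. Second, ``working throughout inside $\mathbf{C}_H(K)$'' as an algebraically closed group is problematic: $f$ need not stabilize $\mathbf{C}_H(K)$ (it need not even fix $K$ setwise), so Fact \ref{hickin_fact} cannot be applied to $f$ viewed as an automorphism of that subgroup. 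The paper applies the lemma in $H$ itself and uses Fact \ref{centralizers} only to supply involutions in the relevant centralizers and outside the relevant finite subgroups.
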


	\begin{proof} By Fact \ref{centralizers} we can find $a \in \mathbf{C}_H(K) - K$ of order $2$, since $H$ is generated by elements of order $2$. Similarly, letting $A = \langle f^{\pm i}(a), f^{\pm i}(K): i < n \rangle_H$, we can find $b'' \in \mathbf{C}_H(\langle f^{-1}(K), f^{-1}(a) \rangle_H) - A$ also of order $2$. Let now $A$ be as above, $B = \langle A, b'' \rangle_H$ and $C = \{ e \}$. Then, by Fact \ref{hickin_fact}, there exists $h: B' \cong_{A} B$ such that  $f(B') \not\subseteq B'$. Notice that $f(A) \subseteq A$, since $f$ is of finite order $n$. Thus, letting $b' = h(b'')$ and $f(b') = b$, we must have that $b \not\in B'$, and so in particular $b \neq a$ and $b \not\in K$, since $A \subseteq B'$. Furthermore, by the choice of $b''$ and that of $(A, B)$ we have:
\[ \begin{array}{rcl}
	b'' \in \mathbf{C}_H(\langle f^{-1}(K), f^{-1}(a) \rangle_H) 
            & \Rightarrow &  b' \in \mathbf{C}_H(\langle f^{-1}(K), f^{-1}(a) \rangle_H)\\
 			& \Rightarrow &  b \in \mathbf{C}_H(\langle K, a \rangle_H), \\
\end{array}	\]
since $B' \cong_{A} B$ and $\langle f^{-1}(K), f^{-1}(a) \rangle_H \leq A$.
\end{proof}

	Finally, we prove Theorem \ref{radical}. For $c \in H$ we denote conjugation by $c$ by $\square_{c}$.

	\begin{proof}[Proof of Theorem \ref{radical}] Let $N \trianglelefteq H$ properly containing $Inn(H)$, we want to show that $N$ is not locally finite. Let then $f \in N -Inn(H)$. If $f$ is of infinite order we are done. So suppose $f$ has finite order. We construct $g \in Aut(H)$ such that $g^{-1}f^{-1}gf$ has infinite order. 
%
Let $\{ d_i : i < \omega \} = H$. By induction on $i < \omega$, we define $K_i \leq_{fin} H$, $c_i \in H$, $(a_{2i-1}, a_{2i}), (b_{2i-1}, b_{2i}) \in H^2$ such that for $i < k < \omega$:
\begin{enumerate}[(i)]
	\item $f(a_i) = b_i$;
	\item $a_i \neq a_k$ and $b_i \neq b_k$ and $\{ a_j : j < i \} \cap \{ b_j : j < i \} = \emptyset$;
	\item $\langle a_j, b_j : j < i \rangle_H = \langle a_j: j < i \rangle_H \oplus \langle b_j : j < i \rangle_H \cong (\mathbb{Z}_2)^i \oplus (\mathbb{Z}_2)^i$;
	\item $K_i \leq K_{k}$;
	\item $(d_0, ..., d_{i-1}) \in K_i^{< \omega}$;
	\item $(a_0, ..., a_{2i-1}), (b_0, ..., b_{2i-1}), (c_0, ..., c_i) \subseteq K_i^{< \omega}$;
	\item $a_{2i}, b_{2i} \in \mathbf{C}_H(K_{i})$;
	\item $c_i \in \mathbf{C}_H(K_{i-1})$ and $\square_{c_i}$ maps $b_{2(i-1)}$ to $b_{2i-1}$ and $a_{2_i}$ to $a_{2i-1}$.
	\end{enumerate}
{\em Base Case}. Since $f \neq id_H$ and $H$ is generated by involutions, we can find $a_0 \neq b_0$ of order $2$ in $H$ such that $f(a_0) = b_0$. Let $c_0 = e$ and $K_0 = \{ e \}$.
\newline {\em Inductive Case}. Let $i > 0$, and suppose that $K_j \leq_{fin} H$, $c_j \in H$ and $(a_{2j-1}, a_{2j})$, $(b_{2j-1}, b_{2j}) \in H^2$ have been defined for every $j < i$. Using Proposition \ref{crucial_prop}, we find commuting $a_{2i-1} \neq b_{2i-1} \in \mathbf{C}_H(K_{i-1}) - K_{i-1}$ of order $2$ such that $f(a_{2i-1}) = b_{2i-1}$. Analogously, we find commuting $a_{2i} \neq b_{2i} \in \mathbf{C}_H(\langle K_{i-1}, a_{2i-1}, b_{2i-1} \rangle_H) - \langle K_{i-1}, a_{2i-1}, b_{2i-1} \rangle_H$ of order $2$ such that $f(a_{2i}) = b_{2i}$. Then, letting 
$$K^* = \langle K_{i-1}, a_{2(i-1)}, a_{2i-1}, a_{2_i}, b_{2(i-1)}, b_{2i-1}, b_{2_i} \rangle_H, $$
$$ K^* = K_{i-1} \oplus \langle a_{2(i-1)}, a_{2i-1}, a_{2_i}, b_{2(i-1)}, b_{2i-1}, b_{2_i} \rangle_H \cong K_{i-1} \oplus ((\mathbb{Z}_2)^3 \oplus (\mathbb{Z}_2)^3).$$ 
Let $\pi \in Aut(K^*)$ be such that $\pi$ is the identity on $K_{i-1}$ and it maps:
\begin{enumerate}[(1)]
	\item $a_{2(i-1)} \mapsto a_{2(i-1)}$ and $a_{2i-1} \mapsto a_{2_i} \mapsto a_{2i-1}$;
	\item $b_{2(i-1)} \mapsto b_{2i-1} \mapsto b_{2(i-1)}$ and $b_{2_i} \mapsto b_{2_i}$.
\end{enumerate}
Then there exists $c_i \in H$ such that $\square_{c_i}$ behaves as $\pi$ on $Aut(K^*)$. Finally, let $K_i = \langle K_{i-1}, a_{2(i-1)}, a_{2i-1}, b_{2(i-1)}, b_{2i-1}, d_{i-1}, c_i \rangle_H$. Then we fulfil the inductive requirements.

\smallskip

\noindent Let now, for $i < \omega$, $c^*_i = c_0 \cdots c_i$ and $g =  \lim(\square_{c^*_i}: i < \omega) \in Aut(H)$. Then for every $i < \omega$ we have:
$$a_{2i} \xmapsto{f} b_{2i} \xmapsto{g} b_{2i+1}\xmapsto{f^{-1}} a_{2i+1} \xmapsto{g^{-1}} a_{2i+2},$$
and so $g^{-1}f^{-1}gf$ has infinite order, as wanted.
\end{proof}

\end{document}